\documentclass[A4paper,reqno,oneside,11pt]{amsart}
\usepackage{fullpage}
\usepackage{setspace}
\usepackage{amsmath, amssymb, amsfonts, amsthm, bbm, mathtools, mathrsfs}
\usepackage{tikz}
\usepackage{subcaption}
\usepackage{verbatim,enumitem,graphics}
\usepackage{xcolor}
\usepackage[backref=page]{hyperref}
\hypersetup{%
    colorlinks,
    linkcolor={red!50!black},
    citecolor={green!50!black},
    urlcolor={blue!50!black}
}
\usepackage{dsfont}
\usepackage{lineno}
\usepackage[normalem]{ulem}

\allowdisplaybreaks

\theoremstyle{plain}
\newtheorem{theorem}{Theorem}[section]
\newtheorem*{theorem*}{Theorem}
\newtheorem{lemma}[theorem]{Lemma}

\newtheorem{conjecture}[theorem]{Conjecture}

\theoremstyle{definition}

\newtheorem*{remark*}{Remark}

\newcommand{\scC}{\mathscr{C}}

\newcommand{\scF}{\mathscr{F}}

\newcommand{\scK}{\mathscr{K}}
\newcommand{\scL}{\mathscr{L}}

\newcommand{\scP}{\mathscr{P}}

\newcommand{\F}{\mathbb{F}}

\newcommand{\ex}{\operatorname{ex}}

\def\d{\delta}

\def\l{\ell}

\def\ve{\varepsilon}

\begin{document}

\setstretch{1.27}

\title{Two counterexamples to a conjecture about even cycles}

\author{David Conlon}
\address{Department of Mathematics, California Institute of Technology, Pasadena, CA 91125, USA}
\email{dconlon@caltech.edu}

\author{Eion Mulrenin}
\address{Department of Mathematics, Emory University, Atlanta, GA 30322, USA}
\email{eion.mulrenin@emory.edu}

\author{Cosmin Pohoata}
\address{Department of Mathematics, Emory University, Atlanta, GA 30322, USA}
\email{cosmin.pohoata@emory.edu}

\thanks{The first author was supported by NSF Award DMS-2348859 and the third author was supported by NSF Award DMS-2246659.}

\begin{abstract}
A conjecture of Verstra\"ete states that for any fixed $\l < k$ there exists a positive constant $c$ such that any $C_{2k}$-free graph $G$ contains a $C_{2\l}$-free subgraph with at least $c |E(G)|$ edges. 
For $\l = 2$, this conjecture was verified by K\"uhn and Osthus in 2004. 
We identify two counterexamples to this conjecture for $\l = 4$ and $k=5$: the first comes from a recent construction of a dense $C_{10}$-free subgraph of the hypercube and the second from Wenger's construction for extremal $C_{10}$-free graphs.
\end{abstract}

\maketitle


\section{Introduction}

For a family $\scF$ of graphs and a positive integer $n$, the {\it extremal number} $\ex(n,\scF)$ is the maximum number of edges in an $n$-vertex graph which contains no member of $\scF$ as a subgraph.
Trivially, $\ex(n,\scF) \leq \ex(n,F)$ for all $F \in \scF$. Conversely, the {\it compactness conjecture} of Erd\H{o}s and Simonovits~\cite{ES82} asserts that if $\scF$ is a finite family which does not contain a forest or, equivalently, if $\ex(n,F) = \Omega(n^{1+\ve})$ for some $\ve > 0$ and all $F \in \scF$,\footnote{Without this hypothesis, families such as $\scF = \{K_{1,2}, 2K_2\}$, where $2K_2$ is a matching with two edges, give simple counterexamples. See for example ~\cite{Wig} for more details.} then there is some $F \in \scF$ such that $\ex(n,F) = O(\ex(n,\scF))$.

This conjecture is of most interest when the family $\scF$ includes bipartite graphs, since a celebrated result of Erd\H{o}s and Stone~\cite{ES46} asymptotically determines the behavior of the extremal number for all non-bipartite graphs. In particular, one might ask whether the conjecture holds for the family 
\begin{equation*}
    \scC_{2k} := \{C_3, C_4, \dots, C_{2k}\}
\end{equation*}
of all cycles of length at most $2k$, where it is essentially asking if $\ex(n, C_{2k}) = O(\ex(n, \scC_{2k}))$ --- see, for example,~\cite[Conjecture IV]{Verstraete} for an explicit statement of this question as a conjecture. Despite the fact~\cite{BS74} that both of these functions are known to be $O(n^{1 + 1/k})$, this simple question of whether they agree up to a constant has only been resolved for $k \in \{2,3,5\}$, with the corresponding lower bounds in these cases witnessed by the so-called {\it generalized polygons}~\cite{LUW99}.

Inspired by this problem, a number of natural extremal questions about the relationships between graphs avoiding particular even cycles have been studied. 
One of the first results in this vein was a theorem of Gy\H{o}ri~\cite{Gyori}, who proved that every $C_6$-free bipartite graph $G$ contains a $C_4$-free subgraph with at least $\frac 12 |E(G)|$ edges. 
Several years later, K\"uhn and Osthus~\cite{KO05} extended Gy\H{o}ri's result by proving that for all $k \geq 3$, every $C_{2k}$-free bipartite\footnote{By the elementary fact that any graph can be made bipartite by deleting at most half the edges, this theorem remains true for  non-bipartite $G$, but at the cost of an extra factor of $1/2$.} graph $G$ has a $C_4$-free subgraph with at least $\frac{1}{k-1} e(G)$ edges.

In the same paper~\cite{KO05}, K\"uhn and Osthus stated the following conjecture, which they attribute to Verstra\"ete (cf.~\cite[Conjecture VIII]{Verstraete}) and which would easily imply that $\ex(n, C_{2k}) = O(\ex(n, \scC_{2k}))$, 
saying that their theorem should extend to all pairs of even cycles.

\begin{conjecture}
\label{conj: main}
    For all integers $2 \leq \l< k$, there exists a positive constant $c$ such that every $C_{2k}$-free bipartite graph $G$ has a $C_{2\l}$-free subgraph $F$ with $|E(F)| \geq c|E(G)|$.
\end{conjecture}

K\"uhn and Osthus also verified this conjecture for other values of $\ell$ and $k$, including, for any given $\ell$, infinitely many values of $k$. 
Despite this evidence for the conjecture, our first result says that arbitrarily large counterexamples exist for $C_8$ and $C_{10}$.

\begin{theorem}
\label{thm: hypercube}
    For any positive constant $c$ and any positive integer $N_0$, there exists an integer $N \geq N_0$ and a $C_{10}$-free bipartite graph $G$ on $N$ vertices with the property that every subgraph with at least $c|E(G)|$ edges contains a $C_8$.
\end{theorem}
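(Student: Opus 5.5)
The graph $G$ will be a dense $C_{10}$-free subgraph of the hypercube $Q_n$, taken from the recent construction of such subgraphs. The point is that $G$ keeps a constant fraction $\alpha>0$ of the edges of $Q_n$ for every $n$. The other ingredient is the classical fact (Chung; Brass--Harborth--Nienborg; Conlon and others) that $C_8$ has vanishing Turán density in the hypercube: every $C_8$-free subgraph of $Q_n$ has $o(e(Q_n))$ edges. With these two facts the argument is short. Given $c>0$ and $N_0$, choose $n$ large enough that $2^n\ge N_0$ and that every $C_8$-free subgraph of $Q_n$ has fewer than $c\alpha\, e(Q_n)$ edges. Let $G\subseteq Q_n$ be the $C_{10}$-free subgraph with $e(G)\ge \alpha e(Q_n)$, on $N=2^n$ vertices; it is bipartite because $Q_n$ is. Any subgraph $F\subseteq G$ with $e(F)\ge c\,e(G)\ge c\alpha\, e(Q_n)$ is also a subgraph of $Q_n$, so it must contain a $C_8$.

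The order of work is as follows. First I would recall the construction and its verification. The construction colours the edges of $Q_n$ by direction and position, or takes a suitable union of a few edge classes, and one checks that it has positive edge density and contains no $10$-cycle. This can be cited. Second, I would establish $\ex(Q_n,C_8)=o(e(Q_n))$. This is known: the hypercube Turán density of $C_{4k}$ is $0$ for $k\ge 2$ (Chung for $k\ge2$ even lengths, Füredi--Özkahya for $C_{4k+2}$, $k\ge 3$; the case $C_8$ is Chung's). If I had to prove it, I would use a supersaturation/averaging argument. Count the pairs consisting of a $C_8$ in $Q_n$ and an ordering of it. Then show that a subgraph of density $\varepsilon$ contains a copy, by embedding $C_8$ via a blow-up style construction, for instance using Chung's trick of $C_{4k}$ in products and a Cauchy--Schwarz count over sub-$Q_d$'s. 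Third, I would combine the two facts as above.

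I expect the main obstacle to be the second step if it had to be proved from scratch, namely showing that $C_8$ has zero density in $Q_n$. Chung's argument counts homomorphic-type copies built from two paths in disjoint coordinate sets and applies convexity, and then one must control degenerate copies. I would cite it rather than reprove it. The first step relies on the recent construction, and its correctness, meaning that no $10$-cycle arises from any pattern of coordinate flips, is the genuinely nontrivial input. Given that, the theorem follows immediately.
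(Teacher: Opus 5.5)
This is correct and is the paper's own argument: you combine the Grebennikov--Marciano $C_{10}$-free subgraph $F_n\subseteq Q_n$, which has $|E(F_n)|>0.024\,|E(Q_n)|$, with Chung's bound $\ex(Q_n,C_8)=o(|E(Q_n)|)$, and choosing $n$ directly rather than arguing by contradiction, as the paper does, makes no difference. Your passing description of the construction as edge classes by direction and position does not match the actual one, which uses random vectors $v_i\in\F_2^r$ and basis conditions between levels $r-1$ and $r$ for odd $r$; since you only cite it, this does not affect the proof.
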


\noindent 
We cannot claim much credit for this construction, as it is a straightforward consequence of a beautiful recent result by Grebennikov and Marciano~\cite{GM25} of a dense $C_{10}$-free subgraph of the hypercube. 
However, as the graphs $G$ in Theorem~\ref{thm: hypercube} have only $\Theta(N \log N)$ edges, it is natural to wonder whether denser counterexamples exist.
Our second result, which is our main contribution, confirms this, showing that there is a family of counterexamples with essentially the maximum possible number of edges. 


\begin{theorem}
\label{thm: wenger}
    For any positive constant $c$ and any positive integer $N_0$, there exists an integer $N \geq N_0$ and a 
    $C_{10}$-free bipartite graph $W$ with $N$ vertices on each side, $N^{6/5}$ edges and the property that every subgraph with at least $c|E(W)|$ edges contains a $C_8$.
\end{theorem}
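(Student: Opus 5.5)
The plan is to take $W$ to be Wenger's $C_{10}$-free graph $W_4(q)$ over $\F_q$ and to show that it contains many $8$-cycles spread evenly over its edges. Recall the construction: both parts are copies of $\F_q^5$, and a point $(p_1,\dots,p_5)$ is adjacent to a line $(l_1,\dots,l_5)$ exactly when
\[ l_j + p_j = p_1 l_{j-1} \quad \text{for } 2\le j\le 5. \]
With $N=q^5$ vertices on each side the graph is $q$-regular, so it has $q^6 = N^{6/5}$ edges. It is $C_{10}$-free when $q$ is a power of $2$, or more generally when $q$ satisfies the conditions in Wenger's paper. I will restrict to $q=2^m$ and let $m\to\infty$, which gives $N \ge N_0$.

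The key step is a supersaturation-type transfer. Suppose we can find a family $\scK$ of $8$-cycles in $W$ such that every edge lies in exactly (or at least) $D$ members of $\scK$. Suppose further that every $C_8$-free subgraph $F$ can contain only a few members of $\scK$ in a controlled sense. The simplest route is a double count. If $F$ is $C_8$-free, then every member of $\scK$ misses at least one edge of $F^c$. Hence
\[ |\scK| \le \sum_{e\notin F} \#\{K\in\scK : e\in K\} = D\,(e(W)-e(F)). \]
Since $|\scK| = D\,e(W)/8$, this only shows $e(F) \le \tfrac78 e(W)$, which is not enough. So I need an amplification. I will use the symmetry of $W$: the automorphism group is edge-transitive and has a rich structure coming from translations and scalings. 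I plan to find, for each $t$, a "blow-up" configuration: a subgraph $H_t \subseteq W$ that is a union of $8$-cycles and is itself a known counterexample structure. Concretely, I want a subgraph isomorphic to a graph in which every subgraph of density $c$ contains a $C_8$. A natural candidate is the $4$-dimensional hypercube-like product structure, or a subdivision/"theta" structure. Averaging over automorphic copies of $H_t$ that cover the edges uniformly, a subgraph $F$ with $e(F)\ge c\,e(W)$ meets some copy of $H_t$ in at least a $c$ fraction of its edges. So it suffices to find such $H_t$ inside $W$ with density threshold tending to $0$ as $t\to\infty$.

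Which $H_t$ should we use? Here I would exploit the linear structure. Fixing $p_1$ on the point side gives an $\F_q$-linear structure, and the set of vertices reachable by alternating paths whose first coordinates use a fixed set $S\subseteq \F_q$ of slopes forms a subgraph in which cycles correspond to vanishing Vandermonde-type sums. An $8$-cycle corresponds to a closed walk with four point-steps and four line-steps whose increments cancel in coordinates up to degree $4$. In characteristic $2$, taking slopes from an additive subgroup (an $\F_2$-subspace $S$ of dimension $t$) should make the subgraph look like a hypercube $Q_t$-type graph. The relevant moment sums then become additive in a way that mimics the Grebennikov--Marciano analysis underlying Theorem~\ref{thm: hypercube}. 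The expected outcome is that the $C_8$'s of $W$ restricted to such a piece reproduce the $C_8$-structure of $Q_t$, or of a graph admitting the same "every $c$-dense subgraph has a $C_8$" property. That property is the content of Theorem~\ref{thm: hypercube}'s proof, specifically the fact that $C_8$ is not Turán-dense-zero in hypercubes in the right sense. Combining this with the averaging gives Theorem~\ref{thm: wenger}.

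The main obstacle is the embedding step. We must identify explicit substructures of Wenger's graph, built from $\F_2$-subspaces of slopes, whose $8$-cycles are rich enough to force a $C_8$ in every $c$-dense subgraph, and we must check that the copies cover $E(W)$ uniformly. I would first try computing the $8$-cycles of $W$ explicitly. Such a cycle is given by slopes $a_1,\dots,a_4$ and $b_1,\dots,b_4$ with $\sum_i (a_i^j - b_i^j)$-type conditions for $j\le 4$; in characteristic $2$ these have many solutions. I would then verify that the resulting $C_8$-hypergraph on edges has the required positive "Ramsey density". That verification is the delicate part.
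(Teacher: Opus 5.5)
Your setup is fine: Wenger's graph has $N=q^5$ vertices per side, is $q$-regular with $q^6=N^{6/5}$ edges, and is $C_{10}$-free. Your averaging reduction is also valid: if copies of some $H_t$ cover $E(W)$ uniformly and $\ex(H_t,C_8)<c\,e(H_t)$, you are done. But the proposal stops where the proof has to begin. You never produce an $H_t$ or prove a density statement, and the mechanism you suggest cannot work.

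$Q_t$ cannot embed in $W$, since $W$ is $C_4$-free. Your description of the $8$-cycles of $W$ is also wrong. In the moment-curve model, an $8$-cycle is four lines with slopes $a_1,\dots,a_4$, cyclically consecutive ones distinct, together with nonzero steps $t_i$ satisfying $\sum_i t_i(1,a_i,\dots,a_i^4)=0$. Any five moment-curve vectors are linearly independent, so after grouping terms by slope, a slope used only once would need $t_i=0$. This forces the pattern $a,b,a,b$ with $t_1+t_3=t_2+t_4=0$. So every $C_8$ is a parallelogram in an affine $2$-plane using exactly two parallel classes, in every characteristic. There is no extra family of solutions in characteristic $2$, and no hypercube $C_8$-structure to import from Chung's theorem. (Incidentally, that theorem says $C_8$ \emph{does} have Tur\'an density zero in $Q_n$.)

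Bounded pieces cannot help either. Any piece in which each point meets at most $s$ parallel classes has a $C_8$-free subgraph with at least a $1/s$ fraction of its edges: keep one incidence per point, leaving a star forest. So a workable $H_t$ must involve unboundedly many classes at each point, and you would still need a genuine counting argument there.

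The missing idea is to use this rigidity globally, as the paper does.
\begin{itemize}
\item For two classes $\scL_i,\scL_j$, the incidences split over $q^3$ parallel planes. Each plane carries a $1$-subdivided $K_{q,q}$, whose subdivision vertices are the points of the plane.
\item If $H\subseteq W$ is $C_8$-free, then in each plane the graph of pairs $(\l_i,\l_j)$ whose intersection point keeps both incidences in $H$ is $C_4$-free. A $C_4$ there would be a parallelogram $C_8$ in $H$.
\item By K\H{o}v\'ari--S\'os--Tur\'an, each such graph has $O(q^{3/2})$ edges.
\item These pairs are exactly the cherries of $H$ centred at points, so $\sum_x\binom{\deg_H(x)}{2}\le\binom{q}{2}q^3\cdot O(q^{3/2})=O(q^{13/2})$.
\item Convexity gives $\sum_x\binom{\deg_H(x)}{2}\ge e(H)^2/(2q^5)-e(H)/2$.
\end{itemize}
Hence $e(H)=O(q^{23/4})=o(q^6)$. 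Your first double count stalls at $\tfrac78$ because it ignores this interplay between the large degree of points and the plane-by-plane $C_4$-freeness.
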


\noindent
We will take the graphs $W$ to be those from Wenger's construction~\cite{Wenger} for extremal $C_{10}$-free graphs, which we describe in Section~\ref{section: wenger} below.


\section{Proof of Theorem~\ref{thm: hypercube}}
\label{section: hypercube}

Here and throughout, $Q_n$ denotes the $n$-dimensional hypercube with $2^n$ vertices and $n 2^{n-1}$ edges and, for graphs $G$ and $H$, $\ex(G,H)$ denotes the maximum number of edges in an $H$-free subgraph of $G$.
Theorem~\ref{thm: hypercube} is an immediate consequence of the following two theorems.

\begin{theorem}[Grebennikov and Marciano, 2025~\cite{GM25}]
\label{thm: GM25}
    For every positive integer $n$,
    there is a subgraph $F_n$ of $Q_n$ which is $C_{10}$-free and has $|E(F_n)| > 0.024 |E(Q_n)|$.
\end{theorem}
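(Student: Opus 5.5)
The plan is to build $F_n$ by an explicit, translation-structured \emph{edge-selection rule} on $Q_n$. Then I would reduce $C_{10}$-freeness to a finite check on a bounded-dimensional subcube, and compute the density exactly.

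\textbf{Construction.} Identify $V(Q_n)=\F_2^n$ and write each edge as $\{x,x+e_i\}$, where $i$ is its direction. Fix a modulus $q$ and a set $S\subseteq \mathbb{Z}_q\times\mathbb{Z}_q$. Keep the edge $\{x,x+e_i\}$ iff
\[
\Bigl(\textstyle\sum_{j<i}x_j \bmod q,\; i \bmod q\Bigr)\in S .
\]
The first coordinate is unchanged when $x_i$ is flipped, so the rule is well defined on edges. I might also use a variant that replaces $\sum_{j<i}x_j$ by a weighted sum, or that splits the coordinates into blocks. For each fixed $i$ and $x_i$, the statistic $\sum_{j<i}x_j \bmod q$ is asymptotically equidistributed. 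Hence the density of $F_n$ tends to $|S|/q^2$, or to an explicitly computable average, and $S$ is chosen to make this exceed $0.024$. For small $n$ I would either verify the bound directly or choose the rule so that the density bound holds exactly for every $n$, say by averaging over $i$.

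\textbf{Reduction of $C_{10}$-freeness to a finite check.} A closed walk in $Q_n$ uses every direction an even number of times. So a $C_{10}$ uses at most $5$ distinct directions, with multiplicity pattern $(2,2,2,2,2)$, $(4,2,2,2)$, $(4,4,2)$, $(6,2,2)$ or similar; the patterns are limited further because a cycle cannot immediately backtrack. The cycle therefore lives in a coset $x+\langle e_{i_1},\dots,e_{i_t}\rangle$ with $t\le 5$. Along such a cycle, the statistic attached to each edge is a constant offset (coming from the coordinates outside the cycle's directions) plus a contribution depending only on the cycle's projection to $Q_t$ and on the relative order of $i_1<\dots<i_t$. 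Thus every possible $C_{10}$ is described by finitely many data: a $10$-cycle in $Q_t$ with $t\le 5$, the order type of its directions, the residues $i_s \bmod q$, and offsets in $\mathbb{Z}_q$. $C_{10}$-freeness of $F_n$ for all $n$ is then equivalent to the statement that none of these finitely many configurations has all ten of its edges in $S$.

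\textbf{Main obstacle.} The hard step is choosing $(q,S)$, or a slightly richer rule, so that this finite check succeeds while the density stays above $0.024$. The two requirements pull against each other: small $S$ kills cycles but loses density. I would first run a computer search over small $q$ and structured $S$, for example $S$ a union of "diagonal" classes $\{(a,b): a+\lambda b\in T\}$. I would use the cycle classification above to prune: the $(2,2,2,2,2)$ pattern, where each direction is used twice, is the most restrictive, and the rule must break every such cycle using the parity of the offsets. If a purely modular rule fails, I would fall back on a two-level rule: a block decomposition of the coordinates, with a good finite $C_{10}$-free pattern inside each block, glued so that cycles spanning several blocks are broken by the inter-block offset. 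The same finite-check reduction applies, at the cost of a somewhat lower density.
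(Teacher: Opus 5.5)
\textbf{The gap.} Your proposal leaves the actual content of the theorem unproved. Everything rests on the existence of a pair $(q,S)$ that passes your finite check with density above $0.024$, and you defer that to a computer search. That search cannot succeed, because the proposed family provably fails.

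Positive density forces $S\neq\emptyset$. Pick $(a,b)\in S$ and let $n\ge 5q$. Take directions $i_1<i_2<i_3<i_4$ with $q\le i_1\le 2q-1$, $i_{s+1}=i_s+q$, and all $i_s\equiv b \pmod q$. Fix the other coordinates as follows:
\begin{itemize}
\item exactly $a$ ones before $i_1$;
\item zeros strictly between $i_1$ and $i_2$;
\item ones on the $q-1$ coordinates strictly between $i_2$ and $i_3$;
\item zeros everywhere else.
\end{itemize}
Write $y\in\F_2^4$ for the coordinates in positions $i_1,\dots,i_4$. The statistic $\sum_{j<i_s}x_j \bmod q$ on the edge in direction $i_s$ at $y$ is $a+|y_{<s}|$ for $s\le 2$ and $a-1+|y_{<s}|$ for $s\ge 3$. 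So that edge is kept whenever $|y_{<s}|=0$ for $s\le 2$, and whenever $|y_{<s}|=1$ for $s\ge 3$.

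Now take the ten vertices $1010, 1000, 1001, 1011, 0011, 0111, 0101, 0100, 0110, 0010$ in cyclic order. Consecutive vertices differ in directions $i_3,i_4,i_3,i_1,i_2,i_3,i_4,i_3,i_2,i_1$. They form a $C_{10}$, and every one of its edges satisfies the conditions above. Hence $F_n\supseteq C_{10}$ for every nonempty $S$ once $n\ge 5q$.

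The underlying reason is twofold. A prefix statistic with a fixed modulus lets padding coordinates set the offsets along a cycle arbitrarily. And a single allowed residue already supports a $10$-cycle. Pigeonholing four directions with the same residue and the same weight gives the same conclusion for your weighted-sum variant. The block-decomposition fallback is too unspecified to evaluate. Incidentally, this cycle has multiplicity pattern $(2,2,4,2)$ inside a $4$-dimensional subcube. Your patterns $(4,4,2)$ and $(6,2,2)$ cannot occur, since $Q_3$ has only $8$ vertices.

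\textbf{What is missing.} The construction of Grebennikov and Marciano, as sketched in the paper, uses ideas with no analogue in your plan:
\begin{itemize}
\item It first applies the Axenovich--Martin--Winter inequality $\ex(Q_n,C_{10})\ge\frac13\ex^*(Q_n,C_6^-)$, which replaces the $C_{10}$ problem by the problem of forbidding $C_6^-$.
\item It then keeps only the layers between levels $r-1$ and $r$ with $r$ odd. Inside such a layer, any induced subgraph containing a $C_6^-$ contains a $C_6$, so it suffices to find dense $C_6$-free induced subgraphs of these layers.
\item It builds these by assigning random nonzero vectors $v_i\in\F_2^r$. It keeps the $r$-sets whose vectors form a basis, and the $(r-1)$-sets that form a basis together with a fixed $v_0$. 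This is deterministically $C_6$-free and has constant density in expectation.
\end{itemize}
The selection depends on the level of a vertex and, through linear independence in a space of unbounded dimension, on its entire support. It is not a bounded-modulus prefix count, which is exactly the kind of rule the offset argument above defeats.
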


\begin{theorem}[Chung, 1992~\cite{Chung}]
\label{thm: C10}
    $\ex(Q_n,C_8) = o(|E(Q_n)|)$.
\end{theorem}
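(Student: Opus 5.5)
The plan is a density-increment-free \emph{Ramsey plus averaging} argument. The cycle to look for is the ``doubled 4-path'': for distinct directions $a,b,c,d$ and a base vertex $x$, the closed walk
\[
x \to x+e_a \to x+e_a+e_b \to x+e_a+e_b+e_c \to x+e_a+e_b+e_c+e_d \to x+e_b+e_c+e_d \to x+e_c+e_d \to x+e_d \to x
\]
visits eight distinct vertices, so it is a $C_8$ in $Q_n$. Identify an edge of a subcube with base $x$ with a pair $(S,i)$, $i\notin S$, meaning the edge from $x+\sum_{s\in S}e_s$ in direction $i$. The eight edges of the cycle are then
\[
(\emptyset,a),\ (\{a\},b),\ (\{a,b\},c),\ (\{a,b,c\},d),\ (\{b,c,d\},a),\ (\{c,d\},b),\ (\{d\},c),\ (\emptyset,d).
\]
Permuting the roles of $a,b,c,d$ gives $24$ such cycle patterns on the same $4$-set, and this freedom will be essential.

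Suppose $G\subseteq Q_n$ has at least $\ve|E(Q_n)|$ edges and no $C_8$. I will proceed in four steps.
\begin{enumerate}
\item Fix a large $m$, depending only on $\ve$. Choose a uniformly random base vertex $x$ and a uniformly random injection $\phi\colon[m]\to[n]$, and consider the $m$-dimensional subcube spanned from $x$ by the directions $\phi([m])$.
\item Colour each $4$-subset $\{a<b<c<d\}\subseteq[m]$ by the vector recording which of the boundedly many edges $(S,i)$, with $S\cup\{i\}\subseteq\{a,b,c,d\}$ and $|S|\le 3$, lie in $G$.
\item By Ramsey's theorem for $4$-uniform hypergraphs with a bounded number of colours, extract a homogeneous set $T\subseteq[m]$ of size $5$ (or any fixed size). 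On $T$, membership of an edge $(S,i)$ in $G$ then depends only on $|S|$ and on the rank of $i$ within $S\cup\{i\}$ (its \emph{order type}).
\item If for each level $j=0,1,2,3$ the order types required by some single one of the $24$ cycle patterns are all present, then $G$ contains a $C_8$, a contradiction.
\end{enumerate}

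The averaging input is the following. Since $|E(G)|\ge\ve|E(Q_n)|$ and the lift $(S,i)\mapsto$ edge of $Q_n$ is nearly uniform when $x$ is random, each level $j\le 3$ of the random subcube has expected density about $\ve$ in $G$. Here $j$ is small compared with $n$, so the levels of a random subcube based at a random $x$ are essentially uniformly distributed edges of $Q_n$; near the middle layer, directions pointing up and down are balanced, and one should allow $S$ to encode coordinate flips rather than additions. Since $\phi$ is a uniformly random injection, the relative order of directions is uniformly random. Hence for each $j$ and each order type $\tau$, the probability that a given lifted edge of type $\tau$ at level $j$ lies in $G$ is about $\ve$, independently of $\tau$.

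The main obstacle is step 4. Ramsey homogeneity only tells us that each order type is entirely in or entirely out. We must show that, with positive probability over $(x,\phi)$, the set of present types is rich enough to realise some cycle pattern. Expectation alone does not prevent $G$ from always containing, say, only the ``$i$ is the largest'' type at every level.

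To handle this, I would first replace colours by densities: apply the Ramsey step to a colouring recording, for each $4$-set, the full $\{0,1\}$ pattern, and then average over the random choice of $\phi$. Because each pattern is hit with probability bounded below, a union bound over the finitely many $2^{O(1)}$ ``bad'' type-sets is possible provided we can show the following. Every set $\mathcal T$ of order types that contains at least one type at each level $j\le 3$ is compatible with one of the $24$ cycle patterns, after also allowing the reflected cycle $x\mapsto x+e_a+e_b+e_c+e_d-\cdots$ that reverses ranks.

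Proving this combinatorial compatibility claim is where I expect the real work to be. If it fails as stated, I would enlarge the family of allowed $C_8$ shapes (for instance using five directions $a,b,c,d,f$ with one repeated pair), so that every choice of one type per level is covered.
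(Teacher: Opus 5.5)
There is a genuine gap, and it cannot be closed within your framework. First, the compatibility claim in step~4 is false as stated. In each of your $24$ patterns the two level-$3$ edges are $(\{a,b,c\},d)$ and $(\{b,c,d\},a)$. These have the same underlying set $\{a,b,c,d\}$ but different marked elements, hence different order types. So a homogeneous set with a single type at level $3$ realises no pattern at all.

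The deeper problem is the choice of cycle. Let $G$ be the union of the layers of $Q_n$ between levels $r$ and $r+1$ over all even $r$. It has $\sum_{r \text{ even}} n\binom{n-1}{r} = n2^{n-2} = \frac12|E(Q_n)|$ edges. Every component of $G$ lies in two consecutive levels, so along any cycle of $G$ the steps alternately add and remove a coordinate. In a doubled $4$-path the direction $a$ is used at steps $1$ and $5$. These two steps flip the same coordinate in opposite senses, yet they have the same parity, so in $G$ they would have to be of the same kind. Hence this density-$\frac12$ graph contains no doubled $4$-path whatsoever, for any base point, any of your $24$ orderings and any reflection. No Ramsey or averaging argument can produce one. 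Your fallback does not help either: a closed walk in $Q_n$ uses each direction an even number of times, so a $C_8$ involves at most four directions.

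What is missing is the right shape of $C_8$, namely the subdivided $4$-cycle
$x\oplus e_a,\ x\oplus e_a\oplus e_b,\ x\oplus e_b,\ x\oplus e_b\oplus e_c,\ x\oplus e_c,\ x\oplus e_c\oplus e_d,\ x\oplus e_d,\ x\oplus e_d\oplus e_a$.
Here the two uses of each direction occur at steps of opposite parity, so it can sit inside a single layer. With it, the proof (Chung's; the paper only cites it) is a cherry count of exactly the type used for Theorem~\ref{thm: main}.

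For $x\in V(Q_n)$, let $\Gamma_x$ be the graph on $[n]$ with $a\sim b$ whenever both edges from $x\oplus e_a\oplus e_b$ to $x\oplus e_a$ and to $x\oplus e_b$ lie in $G$. A $4$-cycle $abcd$ in some $\Gamma_x$ gives the $C_8$ above. Each cherry of $G$ (a vertex $y$ with $G$-edges in two directions $a,b$) is counted exactly once, in $\Gamma_{y\oplus e_a\oplus e_b}$, so $\sum_x e(\Gamma_x)=\sum_y\binom{\deg_G(y)}{2}\ge 2^n\binom{2e(G)/2^n}{2}$.

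If $G$ is $C_8$-free, K\H{o}v\'ari--S\'os--Tur\'an gives $e(\Gamma_x)=O(n^{3/2})$ for every $x$. So $G$ has average degree $O(n^{3/4})$, that is, $e(G)=O(n^{-1/4}|E(Q_n)|)$, which is Chung's bound. No random restriction or hypergraph Ramsey theorem is needed.
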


\begin{proof}[Proof of Theorem~\ref{thm: hypercube}]
    Suppose for the sake of contradiction that there is $c > 0$ such that every $C_{10}$-free bipartite graph $G$ has a $C_8$-free subgraph with at least $c|E(G)|$ edges.
    Taking such a subgraph of the graph $F_n$, which is a subgraph of $Q_n$ and so necessarily bipartite, from Theorem~\ref{thm: GM25} for each $n$, we obtain a sequence of $C_8$-free subgraphs of $Q_n$ containing at least $0.024 c |E(Q_n)|$ edges of $Q_n$, contradicting Theorem~\ref{thm: C10}.
\end{proof}

For the interested reader, we include a brief sketch of the construction of the graphs $F_n$ in Theorem~\ref{thm: GM25}, which was itself inspired by the recent work of Ellis, Ivan, and Leader~\cite{EIL24} on the Tur\'an densities of daisies. The main task is to construct a subgraph of $Q_n$ of positive relative density with no copy of $C_6^-$, the graph obtained by removing an edge from a copy of $C_6$ in $Q_n$. Note that every $C_6^-$ is a path of length five, but not every path of length five is a $C_6^-$. 
This is sufficient because of the inequality
\begin{equation*}
    \ex(Q_n, C_{10}) \geq \frac{1}{3} \cdot \ex^*(Q_n,C_6^-)
\end{equation*}
noted by Axenovich, Martin, and Winter~\cite{AMW24},
where $\ex^*(Q_n,C_6^{-})$ denotes the maximum number of edges in a subgraph of $Q_n$ with no $C_6^-$.
Grebennikov and Marciano construct the required subgraph as follows:
\begin{itemize}
    \item First identify the $r$th level of $Q_n$ for each $0 \leq r \leq n$ with $[n]^{(r)}$ and consider the layers between levels $r-1$ and $r$ for all odd $r$.
    \item Observe that every induced subgraph of a layer that contains a $C_6^-$ also contains a $C_6$, so it is enough to construct a dense $C_6$-free induced subgraph $G_r$ of the $r$th layer for each odd $r$.
    \item Fix a vector $v_0 \in \F_2^r \setminus \{0\}$ and, for each $i \in [n]$, choose a vector $v_i \in \F_2^r \setminus \{0\}$ uniformly at random.
    \item Now let $G_r$ be the induced subgraph between the sets
    \begin{align*}
        & B_r := \{S \in [n]^{(r)}: \{v_i: i \in S\} \text{ forms a basis for } \F_2^r\},\\
        & B_{r-1} := \{S \in [n]^{(r-1)}: \{v_0\} \cup \{v_i: i \in S\} \text{ forms a basis for } \F_2^r\}.
    \end{align*}
    \item Each of these random graphs $G_r$ is dense in the $r$th layer in expectation and is (deterministically) $C_6$-free.
\end{itemize}

As a matter of fact, Chung's result is slightly stronger than what is stated above.
She proved that
\begin{equation*}
    \ex(Q_n, C_8) = O(n^{-1/4} e(Q_n)),
\end{equation*}
obtaining a polylogarithmic saving in terms of the number of vertices.
In the next section, we will prove a stronger power-saving result for $C_8$-free subgraphs of Wenger graphs.


\section{Proof of Theorem~\ref{thm: wenger}}
\label{section: wenger}

We first recall Wenger's construction~\cite{Wenger} for $C_{2k}$-free graphs, following the simplified interpretation given in~\cite{Con21}.
We denote by $W_k(q)$ the bipartite incidence graph on $\scP \cup \scL$, where $\scP = \F_q^k$ 
and $\scL$ is the set of affine lines in $\F_q^k$ with directions of the form $(1,a,\dots,a^{k-1})$ for $a \in \F_q$.
More precisely, for each $a \in \F_q$, we define $\scL_a$ to be the set of all lines of the form 
\begin{equation*}
    \{v + t \cdot (1,a,a^2,\dots,a^{k-1}): t \in \F_q\}
\end{equation*}
for some $v \in F_q^k$ and then we set\footnote{More generally, we can pick any set of lines with the property that any $k$ of the directions determined by the parallel classes are linearly independent; here, the choice of directions is the moment curve $\{(1, a, a^2, \dots, a^{k-1}): a \in \F_q\}$.}
\begin{equation*}
    \scL = \bigcup_{a \in \F_q} \scL_a.
\end{equation*}
It can be shown via a (simple) case analysis that, provided $k \geq \l$, $W_k(q)$ is $C_{2\l}$-free for $\l=2,3,5$ 
and contains copies of $C_{2\l}$ when $\l \notin \{2,3,5\}$ --- see~\cite[Theorem 1]{Con21}.
Theorem~\ref{thm: wenger} thus follows immediately by applying the following more general theorem to the $C_{10}$-free Wenger graphs $W_5(q)$. 
Recall that for graphs $G$ and $H$, $\ex(G,H)$ denotes the maximum number of edges in an $H$-free subgraph of $G$.

\begin{theorem}
\label{thm: main}
    For a prime power $q$, let $\scL_1, \scL_2, \dots, \scL_q$ be $q$ parallel classes of lines in $\F_q^5$, each of size $q^4$, and let $G_\scL(q)$ be the bipartite incidence graph between $\scP = \F_q^5$ and $\scL = \bigcup_{i \in [q]} \scL_i$. 
    Then, for every $\d > 0$, there exists $q_0(\delta)$ such that, for all prime powers $q \geq q_0(\d)$,
    every subgraph $H \subseteq G_\scL(q)$ with at least $\d q^6$ edges contains a copy of $C_8$.
    In fact, as $q \to \infty$,
    \begin{equation*}
        \ex \left( G_\scL(q),C_8 \right)= O(q^{23/4}).
    \end{equation*}
\end{theorem}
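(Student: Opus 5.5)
The plan is to find $C_8$'s as ``parallelograms'' inside $2$-dimensional affine planes, and to reduce the problem to the Kővári–Sós–Turán bound for $C_4$ in a $q\times q$ grid, followed by a convexity (Cauchy–Schwarz) count.

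\textbf{Setup.} Each class $\scL_i$ consists of the $q^4$ translates of a line with direction $d_i$. Distinct parallel classes have distinct directions, so $d_i,d_j$ are linearly independent for $i\neq j$. Every point $p\in\scP$ lies on exactly one line $\ell_i(p)\in\scL_i$ for each $i$. Hence $G_\scL(q)$ is $q$-regular on $\scP$ and has $q^6$ edges. Let $H\subseteq G_\scL(q)$, and let $d_p\le q$ denote the degree of $p$ in $H$.

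\textbf{Local structure.} Fix $i\neq j$. The affine planes $\pi = v+\langle d_i,d_j\rangle$ partition $\F_q^5$ into $q^3$ parallel planes. Within a plane $\pi$, the lines of $\scL_i$ and of $\scL_j$ contained in $\pi$ form a $q\times q$ grid: every $i$-line meets every $j$-line in exactly one point of $\pi$.

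Define an auxiliary bipartite graph $B_\pi$ as follows. Its parts are the $q$ lines of $\scL_i$ in $\pi$ and the $q$ lines of $\scL_j$ in $\pi$. We join $\ell\in\scL_i$ to $m\in\scL_j$ if, for $p=\ell\cap m$, both incidences $(p,\ell)$ and $(p,m)$ lie in $H$.

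Suppose $B_\pi$ contains a $C_4$ $\ell_1 m_1 \ell_2 m_2$. Its four edges correspond to four distinct points $\ell_s\cap m_t$, and together with the lines $\ell_1,\ell_2,m_1,m_2$ these give a cycle $C_8$ in $H$. The eight vertices are distinct since $\ell_1\neq\ell_2$, $m_1\ne m_2$, and the four intersection points are distinct. So if $H$ is $C_8$-free, every $B_\pi$ is $C_4$-free. By the Kővári–Sós–Turán theorem, each $B_\pi$ then has at most $C q^{3/2}$ edges for an absolute constant $C$.

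\textbf{Double counting.} Count triples $(p,\{i,j\})$ such that $p$ is joined in $H$ to both $\ell_i(p)$ and $\ell_j(p)$. On the one hand, this number is exactly $\sum_p \binom{d_p}{2}$. On the other hand, every such triple is an edge of $B_\pi$, where $\pi$ is the unique $\{i,j\}$-plane through $p$. There are $\binom q2$ pairs $\{i,j\}$ and $q^3$ planes for each pair, so
\[
\sum_{p\in\scP}\binom{d_p}{2}\;\le\;\binom q2\, q^3\cdot Cq^{3/2}\;=\;O(q^{13/2}).
\]
By convexity, with $e=e(H)=\sum_p d_p$, we have $\sum_p\binom{d_p}{2}\ge q^5\binom{e/q^5}{2}$. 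This yields $e^2/q^5 \le O(q^{13/2}) + e$, and therefore $e = O(q^{23/4})$.

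This gives $\ex(G_\scL(q),C_8)=O(q^{23/4})$. Since $q^{23/4}<\d q^6$ for $q$ large in terms of $\d$, the first assertion follows as well.

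The only point requiring care is checking that a $C_4$ in $B_\pi$ really yields a genuine $C_8$ with all eight vertices distinct, and this follows from the grid structure noted above. I do not expect a serious obstacle beyond this; the rest is the standard Kővári–Sós–Turán bound and Cauchy–Schwarz.
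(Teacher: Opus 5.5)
Your proposal is correct and follows the paper's proof essentially step for step. Both arguments pass to the $q\times q$ grids in the $q^3$ planes parallel to $\langle d_i,d_j\rangle$, show that each auxiliary grid graph is $C_4$-free when $H$ is $C_8$-free, and double count point-centred cherries using the K\H{o}v\'ari--S\'os--Tur\'an theorem and convexity of $t\mapsto\binom{t}{2}$ to obtain $e(H)=O(q^{23/4})$.
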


\begin{proof}
    Let $q$ be a large prime power, fix parallel classes of lines $\scL_1, \dots, \scL_q$ in $\F_q^5$ and let $\scL = \bigcup_{i \in [q]} \scL_i$ and $G = G_\scL(q)$.
    Throughout the proof, any asymptotic notation will be used in the regime where $q \to \infty$.

    We begin with the following lemma, which says that the intersection graph on $\scL$ has a very rigid structure.

    \begin{lemma}
    \label{lem: intersection-graph}
        For two distinct parallel classes $\scL_i, \scL_j \subseteq \scL$, the intersection graph between $\scL_i$ and $\scL_j$ is a vertex-disjoint union of $q^3$ copies of
        $K_{q,q}$, indexed by the affine $2$-planes in $\F_q^5$ which contain lines from both classes.
    \end{lemma}

    \begin{proof}
        Two lines $\l_i \in \scL_i$ and $\l_j \in \scL_j$ intersect only if they are coplanar, so fix any such pair $\l_i$ and $\l_j$ and let $\Pi$ be the (unique) $2$-plane which contains them.
        Then $\Pi$ contains $q-1$ other lines from each of $\scL_i$ and $\scL_j$, since any parallel class in an affine $2$-plane over $\F_q$ has $q$ lines in it.
        Moreover, since any two non-parallel lines in an affine $2$-plane must intersect, these lines form a $K_{q,q}$ in the intersection graph between $\scL_i$ and $\scL_j$.
        Using the observation that all lines in $\scL_i$ which are not contained in $\Pi$ are necessarily skew to $\l_j$ and vice versa, it follows that these copies of $K_{q,q}$ are vertex-disjoint and partition the edge set of the intersection graph between $\scL_i$ and $\scL_j$.
        Finally, since the copies of $K_{q,q}$ are indexed by affine $2$-planes containing lines from both $\scL_i$ and $\scL_j$, we may count them by counting the number of such $2$-planes. In order for an affine $2$-plane to contain a line from $\scL_i$ and a line from $\scL_j$, this plane must be either $\Pi$ or an affine $2$-plane parallel to $\Pi$. The number of these $2$-planes is precisely $q^5/q^2 = q^3$. 
    \end{proof}

    Now, fix a subgraph $H \subseteq G$. 
    For a plane $\Pi$ and a set of lines $\scK$, we define $\scK(\Pi)$ to be the set of lines from $\scK$ which lie entirely in $\Pi$. For distinct parallel classes $\scL_i$ and $\scL_j$ in $\scL$ and a plane $\Pi$ containing lines from both $\scL_i$ and $\scL_j$, we define a bipartite graph
    $G_\Pi^{i,j}(H)$ as follows:
    \begin{itemize}[leftmargin=1.75em]
        \item the left vertex set is $\scL_i(\Pi)$, the $q$ lines from $\scL_i$ which lie in $\Pi$;
        \item the right vertex set is $\scL_j(\Pi)$, the $q$ lines from $\scL_j$ which lie in $\Pi$;
        \item $\l_i \in \scL_i(\Pi)$ and $\l_j \in \scL_j(\Pi)$ are adjacent if, for $x = \l_i \cap \l_j$ their unique common point,
        both incidences $x \in \l_i$ and $x \in \l_j$ remain edges in $H$.
    \end{itemize}
    Note that each such common point $x$ is well defined by the fact that two non-parallel lines in an affine plane must intersect. 
    Here comes the crucial point.

    \begin{lemma}
    \label{lem: no-rectangles}
        If $H$ is $C_8$-free, then every auxiliary graph $G_\Pi^{i,j}(H)$ is $C_4$-free.
    \end{lemma}

    \begin{proof}
        A $4$-cycle in $G_\Pi^{i,j}(H)$ consists of two lines in $\scL_i(\Pi)$ and two lines in $\scL_j(\Pi)$ with all
        four cross-intersections active. 
        These four lines then form a copy of $C_8$ in $H$, as the four intersection points
        are distinct because lines with the same direction are parallel and distinct. 
        Therefore, a $C_4$ in some
        $G_\Pi^{i,j}(H)$ would produce a $C_8$ in $H$.
    \end{proof}

    We are now ready to complete the proof of Theorem~\ref{thm: main}.
    Let $H\subseteq G$ be $C_8$-free and write $m=e(H)$. 
    The proof proceeds by double counting the total number $\Psi$ of cherries in $H$ consisting of a point and two lines.
    More precisely, let $\Psi$ be the number of triples $(x,\l_i,\l_j)$ where $x \in \F_q^5$, $\l_i \in \scL_i$, $\l_j \in \scL_j$ for distinct $i, j \in [q]$ and where $x = \l_i \cap \l_j$ with the incidences $(x,\l_i)$, $(x,\l_j)$ both being in $E(H)$. 

    On the one hand, $\Psi = \sum_{x \in \F_q^5} \binom{\deg_H(x)}{2}$. 
    We know  $\sum_{x \in \F_q^5} \deg_H(x) = m$, so the convexity of the function $t \mapsto \binom{t}{2}$ yields that
    \begin{equation}
    \label{eq: psi-one-hand}
        \Psi 
        = \sum_{x\in \F_q^5} \binom{\deg_H(x)}{2}
        \geq q^5\binom{m/q^5}{2}
        =\frac{m^2}{2q^5}-\frac{m}{2}.
    \end{equation}

    On the other hand, observe that the cherries we want to count are in one-to-one correspondence with edges in the auxiliary intersection graphs $G_\Pi^{i,j}(H)$, i.e., 
    \begin{equation}
    \label{eq: psi-other-hand}
        \Psi = \sum_{i \neq j \in [q]} \sum_\Pi |E\bigl(G_\Pi^{i,j}(H)\bigr)|,
    \end{equation}
    where the inside sum runs over the $q^3$ planes $\Pi$ which contain lines from both $\scL_i$ and $\scL_j$.
    Since $H$ is $C_8$-free, Lemmas~\ref{lem: intersection-graph} and~\ref{lem: no-rectangles}
    show that each $G_\Pi^{i,j}(H)$ is a $C_4$-free subgraph of $K_{q,q}$; therefore, the K\H{o}v\'ari--S\'os--Tur\'an theorem (see, e.g.,~\cite[Theorem VI.2.2]{Bollobas}) gives
    \begin{equation*}
        |E\bigl(G_\Pi^{i,j}(H)\bigr)| \leq q^{3/2}+q.
    \end{equation*}
    Summing over the $q^3$ planes and plugging back into~\eqref{eq: psi-other-hand}, we get that
    \begin{equation*}
        \Psi 
        \leq \sum_{i \neq j \in [q]} q^3 \cdot (q^{3/2}+q)
        = \binom{q}{2} \cdot q^3 \cdot (q^{3/2}+q)
        =O(q^{13/2}).
    \end{equation*}
    Combining this with~\eqref{eq: psi-one-hand}, it now follows that
    \begin{equation*}
        \frac{m^2}{2q^5}-\frac{m}{2} \leq \Psi =O(q^{13/2})
    \end{equation*}
    and so
    \begin{equation*}
        m=O(q^{23/4}).
    \end{equation*}
    In particular, $m=o(q^6)$. 
    Therefore, for every fixed $\d > 0$, the $C_8$-free subgraph $H$
    cannot have $\d q^6$ edges once $q$ is sufficiently large. 
\end{proof}

\vspace{-5mm} 

\section{Concluding remarks}
\label{section: conclusion}

Given that we deduce Theorem~\ref{thm: wenger} from the more general Theorem~\ref{thm: main} on the robustness of $C_8$'s in point-line incidence graphs over $\F_q^5$, it might be tempting to conjecture that all constructions of extremal $C_{10}$-free graphs share this property.
This is, however, not true: indeed, as stated in the introduction, the so-called {\it generalized hexagons}~\cite{LUW99} show that $\ex(n, \scC_{10}) = \Omega(n^{6/5})$. We thus believe that our result illustrates an interesting difference between the Wenger construction and other constructions of dense $C_{10}$-free graphs. 

By following the same steps as in the argument in Section \ref{section: wenger}, one may also show that for any $k \geq 2$ the incidence graph $G_\scL(q)$ between the points of $\F_q^k$ and $q$ parallel classes of lines has the property that
    \begin{equation*}
        \ex(G_\scL(q), H) = o(e(G_\scL(q)))
    \end{equation*}
for every fixed graph $H$ that is a subdivision of a bipartite graph. 
In particular, no edge-sampling argument in $W_{2k}(q)$ can produce a $C_{4k}$-free graph with $2q^{2k}$ vertices and $\Omega(q^{2k+1})$ edges for $k\geq 2$. 

Regarding more general pairs of even cycles, we believe that Conjecture~\ref{conj: main} should not hold for $C_{4k}$ and $C_{4k+2}$ for any $k \geq 2$. 
The two constructions in this paper confirm this for $k = 2$, but in general we expect that $\ex(Q_n,C_{4k}) = o(\ex(Q_n,C_{4k+2}))$, which, as in the proof of Theorem~\ref{thm: hypercube}, would yield the required counterexample. 
The intuition here, coming from~\cite{Con10}, is that the best known upper bound for $\ex(Q_n,C_{4k})$ is in a sense inherited from the upper bound for $\ex(N,C_{2k})$, while the upper bound for $\ex(Q_n,C_{4k+2})$ comes from the upper bound for $\ex(N,H)$ for an appropriate $3$-uniform hypergraph $H$. 
This then suggests that $\ex(Q_n,C_{4k})/e(Q_n)$ should drop off more quickly with $k$ than  $\ex(Q_n,C_{4k+2})/e(Q_n)$. 
If this is indeed the case, it is then likely that for any positive integer $t \equiv 2$ (mod $4$) there exists $k_0$ such that Conjecture~\ref{conj: main} does not hold for $C_{4k}$ and $C_{4k+t}$ for any $k \geq k_0$.


\end{document}